\theoremstyle{plain}
\newtheorem{thm}{Theorem}[section]
\newtheorem{prop}{Proposition}[section]
\newtheorem{lem}{Lemma}[section]
\newtheorem{cor}{Corollary}[section]
\theoremstyle{definition}
\theoremstyle{remark}
\numberwithin{equation}{section}
\newcommand{\R}{\mathbb{R}}
\newcommand{\C}{\mathbb{C}}
\newcommand{\cc}[1]{\overline{#1}}
\newcommand{\op}[1]{\mathcal{#1}}
\newcommand{\pa}{\partial}
\newcommand{\eps}{\varepsilon}
\newcommand{\jb}[1]{\langle #1 \rangle}
\DeclareMathOperator{\realpart}{\rm Re}
\newcommand{\dis}{\displaystyle}
\begin{document}
\title{
Large time asymptotics for a cubic nonlinear Schr\"odinger system
in one space dimension, II\\ 
 }

\author{
          Chunhua Li  \thanks{
              Department of Mathematics, College of Science,
              Yanbian University.
              977 Gongyuan Road, Yanji,
              Jilin 133002, China.
              (E-mail: {\tt sxlch@ybu.edu.cn})
             }
          \and
          Yoshinori Nishii\thanks{
              Department of Mathematics, Graduate School of Science,
              Osaka University.
              1-1 Machikaneyama-cho, Toyonaka,
              Osaka 560-0043, Japan.
              (E-mail: {\tt y-nishii@cr.math.sci.osaka-u.ac.jp})             }
           \and
          Yuji Sagawa \thanks{
             Micron Memory Japan, G.K.
             7-10 Yoshikawakogyodanchi, Higashihiroshima,
             Hiroshima 739-0153, Japan.}
           \and
          Hideaki Sunagawa \thanks{
              Department of Mathematics, Graduate School of Science,
              Osaka University.
              1-1 Machikaneyama-cho, Toyonaka,
              Osaka 560-0043, Japan.
              (E-mail: {\tt sunagawa@math.sci.osaka-u.ac.jp})
             }
}

\date{\today }
\maketitle

\noindent{\bf Abstract:}\ This is a sequel to the paper
``Large time asymptotics for a cubic nonlinear Schr\"odinger system
in one space dimension" by the same authors. 
We continue to study the Cauchy problem for the two-component system of cubic 
nonlinear Schr\"odinger equations in one space dimension. 
We provide criteria for
large time decay or non-decay in $L^2$ of the small amplitude solutions in
terms of the Fourier transforms of the initial data.
\\

\noindent{\bf Key Words:}\
Nonlinear  Schr\"odinger system, large time behavior, decay/non-decay in $L^2$. 
\\

\noindent{\bf 2010 Mathematics Subject Classification:}\
35Q55, 35B40.

\section{Introduction}  \label{sec_intro}
This is a sequel  to the paper \cite{LNSS}.
We continue to study the Cauchy problem for
\begin{align}
\left\{\begin{array}{ll}
\begin{array}{l}
 \op{L} u_1=- i|u_2|^2 u_1,\\
 \op{L} u_2=- i|u_1|^2 u_2,
 \end{array}
 & (t,x )\in (0,\infty)\times \R,
 \end{array}\right.
\label{eq}
\end{align}
with
\begin{align}
u_j(0,x)=\varphi^0_j(x),
\qquad  x \in \R,\ j=1,2,
\label{data}
\end{align}
where $i=\sqrt{-1}$, $\op{L}=i\pa_t+(1/2)\pa_{x}^2$, and
$\varphi^0=(\varphi^0_1(x), \varphi_2^0(x))$ is a given $\C^2$-valued
function of $x\in \R$.
The following result has been obtained in \cite{LNSS}.
As in \cite{LNSS}, $H^{s,\sigma}$ stands for the $L^2$-based weighted
Sobolev space of order $s$, $\sigma$. We also write $H^s$ for $H^{s,0}$.
\begin{prop}[\cite{LNSS}]\label{prop_lnss1}
Suppose that $\varphi^0=(\varphi_1^0,\varphi_2^0)\in H^{2}\cap H^{1,1}$
and $\|\varphi^0\|_{H^2\cap H^{1,1}}$ is suitably small.
Let $u=(u_1,u_2)\in C([0,\infty); H^2\cap H^{1,1})$
be the solution to \eqref{eq}--\eqref{data}. 
Then there exists $\varphi^+=(\varphi_1^+,\varphi_2^+)\in L^2$ with
$\hat{\varphi}^+=(\hat{\varphi}_1^+,\hat{\varphi}_2^+)\in L^{\infty}$
such that
\begin{align}\label{scattering}
 \lim_{t\to +\infty} \|u_j(t)-\op{U}(t)\varphi_j^+\|_{L^2}=0,\quad j=1,2,
\end{align}
where $\op{U}(t)=\exp(i\frac{t}{2}\pa_x^2)$.
Moreover we have
\begin{align}\label{comp_rel}
\hat{\varphi}_1^+(\xi)\cdot \hat{\varphi}_2^+(\xi)=0
\end{align}
for each $\xi \in \R$, where 
$\hat{\phi}=\op{F}\phi$ denotes the Fourier transform of a function $\phi$.
\end{prop}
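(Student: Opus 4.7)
The plan is to reduce the PDE system to an ODE for the Fourier transform of the profile and then exploit the dissipative, non-gauge-invariant form of the nonlinearity. First I would set $v_j(t) := \op{U}(-t) u_j(t)$ and $\alpha_j(t,\xi) := \op{F} v_j(t)(\xi)$. A direct computation gives $\pa_t v_j = -\op{U}(-t)(|u_k|^2 u_j)$ for $\{j,k\} = \{1,2\}$. Using the standard MDFM factorization of the Schr\"odinger group, which yields the pointwise asymptotics $u_j(t,x) = (it)^{-1/2} e^{ix^2/(2t)} \alpha_j(t, x/t) + R_j(t,x)$ with $R_j$ controlled by the a priori $H^{1,1}$ bound on $u(t)$ coming from the construction in \cite{LNSS}, I would substitute this representation into $|u_k|^2 u_j$ and apply $\op{F}\op{U}(-t)$ to obtain the model ODE
\begin{equation*}
   \pa_t \alpha_j(t,\xi) = -\frac{1}{t}\,|\alpha_k(t,\xi)|^2\,\alpha_j(t,\xi) + \rho_j(t,\xi), \qquad \{j,k\} = \{1,2\},
\end{equation*}
with a remainder $\rho_j$ that is integrable in $t$ uniformly in $\xi \in \R$.

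The leading ODE is real and dissipative: $\pa_t |\alpha_j(t,\xi)|^2 = -\tfrac{2}{t}|\alpha_1(t,\xi)|^2 |\alpha_2(t,\xi)|^2 + (\text{integrable})$. Subtracting over $j$ shows that $|\alpha_1(t,\xi)|^2 - |\alpha_2(t,\xi)|^2$ converges pointwise as $t \to \infty$ to some $c(\xi) \in \R$, while the sum is monotone decreasing (up to an integrable error) and therefore also convergent. Since $\int_1^\infty dt/t$ diverges, $|\alpha_1(t,\xi)||\alpha_2(t,\xi)|$ cannot remain bounded away from $0$, so it must tend to $0$; splitting by $\mathrm{sign}\,c(\xi)$ then identifies the individual pointwise limit $\hat\varphi_j^+(\xi) := \lim_t \alpha_j(t,\xi)$, and the compatibility relation \eqref{comp_rel} is just the identity $\lim_t|\alpha_1(t,\xi)\alpha_2(t,\xi)| = 0$.

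For the $L^2$ scattering statement \eqref{scattering} I would use the dissipation identity
\begin{equation*}
   \frac{d}{dt}\|u_j(t)\|_{L^2}^2 = -2\int_{\R} |u_1(t,x)|^2 |u_2(t,x)|^2\,dx \le 0,
\end{equation*}
obtained by pairing the $j$-th equation against $\bar u_j$ and taking imaginary parts. This gives $\|v_j(t)\|_{L^2} = \|u_j(t)\|_{L^2}$ monotone decreasing to a finite limit and $\int_0^\infty\!\int_{\R}|u_1|^2|u_2|^2\,dx\,dt < \infty$. Combined with the uniform $L^2$ bound on $v_j$ and the pointwise Fourier-side convergence just obtained, standard arguments yield weak convergence $v_j(t) \rightharpoonup \varphi_j^+$ in $L^2$; identifying $\|\varphi_j^+\|_{L^2}^2$ with $\lim_t \|v_j(t)\|_{L^2}^2$ via integration of the ODE for $|\alpha_j|^2$ against $d\xi$ then upgrades this to strong $L^2$ convergence, which is precisely \eqref{scattering}.

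The main obstacle is this last step: cubic nonlinearities in one space dimension are $L^2$-critical and typically force modified scattering with logarithmic phase corrections. Genuine unmodified $L^2$ scattering holds here precisely because of the dissipative algebraic structure — the matching signs that put $-2|u_1|^2|u_2|^2$ on the right of the mass identity — and care is needed when matching norm limits with the limiting profile so that weak convergence on the Fourier side can be upgraded to strong convergence in $L^2$.
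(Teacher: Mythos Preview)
The present paper does not itself prove Proposition~\ref{prop_lnss1}; it is quoted from \cite{LNSS}. What the paper does disclose about the argument in \cite{LNSS} (see Section~3.1 and Proposition~\ref{prop_key_lnss1}) matches your outline exactly: one works with the profiles $\alpha_j(t,\xi)=\op{F}\bigl[\op{U}(-t)u_j(t)\bigr](\xi)$, derives the model ODE $\pa_t\alpha_j=-t^{-1}|\alpha_k|^2\alpha_j+R_j$ with integrable remainders $R_j$, and uses that $\pa_t\bigl(|\alpha_1|^2-|\alpha_2|^2\bigr)=\rho$ is integrable in $t$ (this is the formula \eqref{m}, and the limit $m(\xi)$ is precisely your $c(\xi)$). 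Your trichotomy on the sign of $c(\xi)$, forcing one of $|\alpha_1|,|\alpha_2|$ to vanish in the limit, is exactly Proposition~\ref{prop_key_lnss1}, of which \eqref{comp_rel} is the stated corollary. So for the pointwise Fourier-side statement and for \eqref{comp_rel}, your proposal \emph{is} the paper's approach.

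The one place where your route may diverge from \cite{LNSS} is the $L^2$ scattering \eqref{scattering}. Your plan is weak convergence plus matching of norms; the delicate point you flag---identifying $\lim_t\|u_j(t)\|_{L^2}^2$ with $\|\hat\varphi_j^+\|_{L^2}^2$ by integrating the $|\alpha_j|^2$-ODE in $\xi$---does go through, but only because the remainders $R_j$ in Section~3.1 carry the extra factor $\jb{\xi}^{-2}$ (cf.\ \eqref{rho}) that supplies dominated convergence on the Fourier side. A more direct alternative, and likely closer to \cite{LNSS}, is to show $t\mapsto\alpha_j(t,\cdot)$ is Cauchy in $L^2$: the leading term $-t^{-1}|\alpha_k|^2\alpha_j$ has the correct sign to make $|\alpha_j(t,\xi)|$ nonincreasing up to an error in $L^1_t L^2_\xi$, and the $\jb{\xi}^{-2}$ decay of the remainder gives the required integrability directly, bypassing the weak--strong upgrade.
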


As we have mentioned in \cite{LNSS}, the most important thing in
Proposition~\ref{prop_lnss1} is
the relation \eqref{comp_rel} which does not appear in the
usual short-range situation.
The purpose of this sequel is to investigate it in more detail.
For this purpose, let us recall the following proposition also shown
in \cite{LNSS}.

\begin{prop}[\cite{LNSS}]\label{prop_key_lnss1}
We put $\dis{\varphi_j^+=\lim_{t\to+\infty}\op{U}(-t)u_j(t)}$
in $L^2$, $j=1,2$, for the global solution $u=(u_1,u_2)$ to
\eqref{eq}--\eqref{data},
whose existence is guaranteed by Proposition~\ref{prop_lnss1}.
There exists a function $m:\R\to\R$ such that the following holds
for each $\xi \in \R$
\begin{itemize}
\item
$m(\xi)>0$ implies
$\hat{\varphi}_1^+(\xi)\ne 0$ and $\hat{\varphi}_2^+(\xi) = 0$;
\item
$m(\xi)<0$ implies
$\hat{\varphi}_1^+(\xi)= 0$ and $\hat{\varphi}_2^+(\xi)\ne 0$;
\item
$m(\xi)=0$ implies
$\hat{\varphi}_1^+(\xi)=\hat{\varphi}_2^+(\xi)=0$.
\end{itemize}
\end{prop}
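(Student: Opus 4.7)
The plan is to extract an approximate ODE for the Fourier profile of the solution and to identify $m$ as an asymptotic conserved quantity of that ODE. Using the standard factorization $\op{U}(t) = M(t) D(t) \op{F} M(t)$ with $M(t)=e^{ix^2/(2t)}$ and $D(t)$ the natural unitary dilation, introduce the profile $\alpha_j(t,\xi) := \op{F}[\op{U}(-t)u_j(t)](\xi)$ for $j=1,2$. A stationary-phase expansion in the cubic nonlinearities of \eqref{eq}---precisely of the type already performed for Proposition~\ref{prop_lnss1}---yields
\[
\pa_t \alpha_j(t,\xi) = -\frac{C}{t}|\alpha_k(t,\xi)|^2 \alpha_j(t,\xi) + R_j(t,\xi),
\qquad \{j,k\}=\{1,2\},
\]
for a positive constant $C$, with a remainder $R_j(\cdot,\xi)$ that is integrable in $t$ on $[1,\infty)$ pointwise in $\xi$.

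The algebraic key is that $\pa_t |\alpha_j(t,\xi)|^2 = -\tfrac{2C}{t}|\alpha_1(t,\xi)|^2|\alpha_2(t,\xi)|^2 + 2\realpart(\overline{\alpha_j}R_j)(t,\xi)$, so the leading dissipative term is \emph{identical} for $j=1,2$. Consequently $|\alpha_1(t,\xi)|^2 - |\alpha_2(t,\xi)|^2$ has integrable time-derivative and admits a pointwise limit as $t\to\infty$; by the $L^2$-scattering of Proposition~\ref{prop_lnss1} combined with $\hat{\varphi}^+\in L^\infty$, that limit equals
\[
m(\xi) := |\hat{\varphi}_1^+(\xi)|^2 - |\hat{\varphi}_2^+(\xi)|^2,
\]
which is the function I take. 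The trichotomy is then essentially a restatement of \eqref{comp_rel}: if $m(\xi)>0$, then $\hat{\varphi}_1^+(\xi)=0$ would force $m(\xi)\le 0$, so $\hat{\varphi}_1^+(\xi)\ne 0$, and \eqref{comp_rel} in turn forces $\hat{\varphi}_2^+(\xi)=0$. The case $m(\xi)<0$ is symmetric with the roles of the two components swapped, and $m(\xi)=0$ combined with \eqref{comp_rel} forces both Fourier modes to vanish.

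The one genuine difficulty is obtaining pointwise-in-$\xi$ control of the remainder $R_j$, since the conclusion is required for every $\xi\in\R$ and not merely for almost every $\xi$. This is exactly the pointwise estimate already secured in the proof of Proposition~\ref{prop_lnss1} via the weighted-Sobolev smallness of $\varphi^0$ in $H^2\cap H^{1,1}$; with that estimate in hand the argument above is routine bookkeeping. No fundamentally new analytic input beyond \cite{LNSS} is needed.
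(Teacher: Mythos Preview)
The paper does not itself prove Proposition~\ref{prop_key_lnss1}; it is quoted from \cite{LNSS}. What the paper does record, in Section~3.1, is the explicit formula
\[
m(\xi)=|\alpha_1(2,\xi)|^2-|\alpha_2(2,\xi)|^2+\int_2^{\infty}\rho(\tau,\xi)\,d\tau
\]
for the $m$ constructed in \cite{LNSS}, which is precisely the pointwise limit $\lim_{t\to\infty}\bigl(|\alpha_1(t,\xi)|^2-|\alpha_2(t,\xi)|^2\bigr)$ you obtain from the profile ODE. So your identification of the key quantity is the correct one and matches \cite{LNSS}.

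Where your argument departs from the intended logic is in deducing the trichotomy from \eqref{comp_rel}. The paper states immediately after Proposition~\ref{prop_key_lnss1} that \eqref{comp_rel} is a \emph{consequence} of that proposition, not an input to it; invoking \eqref{comp_rel} therefore makes the argument circular relative to \cite{LNSS}. The direct route, which you have in fact set up, is to argue from the ODE itself: if $m(\xi)>0$ then eventually $|\alpha_1(t,\xi)|^2\ge |\alpha_2(t,\xi)|^2+\tfrac12 m(\xi)$, whence $\pa_t|\alpha_2|^2\lesssim -\tfrac{C\,m(\xi)}{t}|\alpha_2|^2+|R|$ forces $|\alpha_2(t,\xi)|^2\to 0$, and then $|\alpha_1(t,\xi)|^2\to m(\xi)>0$; the other two cases are handled symmetrically. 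A second minor gap: the sentence ``by the $L^2$-scattering combined with $\hat\varphi^+\in L^\infty$, that limit equals $|\hat\varphi_1^+(\xi)|^2-|\hat\varphi_2^+(\xi)|^2$'' does not quite work, since $L^2$-convergence only gives a.e.\ pointwise limits. The fix is the same ODE analysis: integrability of $R_j(\cdot,\xi)$ shows $\alpha_j(t,\xi)$ has a pointwise limit for \emph{every} $\xi$, and that limit is the pointwise representative of $\hat\varphi_j^+$.
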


Note that \eqref{comp_rel} is an immediate consequence of
Proposition~\ref{prop_key_lnss1}. In other words,
Proposition~\ref{prop_key_lnss1} is more precise than
\eqref{comp_rel}, and the function $m(\xi)$ plays an important role
in it. This indicates that better understanding of $m(\xi)$ 
will bring us more precise information on the scattering state 
$\varphi^+$. 

Our aim in the present paper is to specify the leading term of
$m(\xi)$ for sufficiently small initial data. This will allow us to
find criteria for $L^2$ decay/non-decay of each component of the
solutions to \eqref{eq}. 

\section{The leading term of $m(\xi)$ in the small amplitude limit}
\label{sec_asymp_m}
In what follows, we put a small parameter $\eps$ in front of
the initial data to distinguish information on the amplitude from the others,
that is, we replace the initial condition \eqref{data} by
\begin{align}
 u_j(0,x) =\eps \psi_j(x),\quad j=1,2,
\label{data_eps}
\end{align}
where $\psi_j\in H^{2}\cap H^{1,1}$ is independent of $\eps$.
Our main theorem reads as follows.

\begin{thm}\label{thm_main}
Let $m$ be the function given in Proposition~\ref{prop_key_lnss1}
with the initial condition \eqref{data} replaced by \eqref{data_eps}.
We have
\begin{align*}
m(\xi)
=
\eps^2 \bigl(|\hat{\psi}_1(\xi)|^2-|\hat{\psi}_2(\xi)|^2\bigr)
+
O(\eps^4)
\end{align*}
as $\eps \to +0$ uniformly in $\xi \in \R$.
\end{thm}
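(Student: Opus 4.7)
The strategy is to track the Fourier profile $\alpha_j(t,\xi) := \op{F}\op{U}(-t)u_j(t,\xi)$ and exploit an approximate conservation law for the difference $|\alpha_1(t,\xi)|^2 - |\alpha_2(t,\xi)|^2$. Recall from \cite{LNSS} that, under the smallness hypothesis on $\eps\psi$, the profile satisfies, for $t \ge 1$,
\begin{equation*}
\partial_t \alpha_j(t,\xi) = -\frac{1}{t}\bigl|\alpha_{3-j}(t,\xi)\bigr|^2 \alpha_j(t,\xi) + R_j(t,\xi),
\qquad j = 1,2,
\end{equation*}
together with uniform bounds of the form $|\alpha_j(t,\xi)| \le C\eps$ and $|R_j(t,\xi)| \le C\eps^3 t^{-1-\delta}$ for some $\delta > 0$. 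I would also use the fact, contained in the analysis of \cite{LNSS} behind Proposition~\ref{prop_key_lnss1}, that $m(\xi)$ can be identified with $\lim_{t \to \infty}\bigl(|\alpha_1(t,\xi)|^2 - |\alpha_2(t,\xi)|^2\bigr) = |\hat{\varphi}_1^+(\xi)|^2 - |\hat{\varphi}_2^+(\xi)|^2$; the sign trichotomy stated in Proposition~\ref{prop_key_lnss1} is an immediate consequence of this identification.

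The key observation is that the leading-order ODE preserves $|\alpha_1|^2 - |\alpha_2|^2$. Indeed, a direct computation gives
\begin{equation*}
\partial_t |\alpha_j|^2 = -\frac{2}{t} |\alpha_1|^2 |\alpha_2|^2 + 2\,\mathrm{Re}\bigl(\overline{\alpha_j}\, R_j\bigr)
\end{equation*}
for both $j = 1, 2$, so that subtraction cancels the leading term and leaves
\begin{equation*}
\partial_t \bigl(|\alpha_1|^2 - |\alpha_2|^2\bigr)
= 2\,\mathrm{Re}\bigl(\overline{\alpha_1}\, R_1 - \overline{\alpha_2}\, R_2\bigr).
\end{equation*}
The uniform bounds above yield $|\partial_t(|\alpha_1|^2 - |\alpha_2|^2)| \le C\eps^4 t^{-1-\delta}$, and integrating from $t = 1$ to $t = \infty$ gives
\begin{equation*}
m(\xi) = |\alpha_1(1,\xi)|^2 - |\alpha_2(1,\xi)|^2 + O(\eps^4)
\end{equation*}
uniformly in $\xi \in \R$.

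To finish, I would transfer $\alpha_j(1,\xi)$ back to the initial datum. On the bounded interval $[0,1]$ the evolution is short-range: writing $\partial_t \alpha_j(t,\xi) = -\op{F}\op{U}(-t)\bigl[|u_{3-j}|^2 u_j\bigr](t,\xi)$ and bounding the Fourier transform in $L^\infty_\xi$ by the $L^1_x$-norm of its argument, one gets $\||u_{3-j}|^2 u_j\|_{L^1_x} \le \|u_{3-j}\|_{L^2}^2 \|u_j\|_{L^\infty} = O(\eps^3)$ via Sobolev embedding. Hence $\|\alpha_j(1,\cdot) - \eps\hat{\psi}_j\|_{L^\infty_\xi} = O(\eps^3)$, and squaring and subtracting yields
\begin{equation*}
|\alpha_1(1,\xi)|^2 - |\alpha_2(1,\xi)|^2 = \eps^2\bigl(|\hat{\psi}_1(\xi)|^2 - |\hat{\psi}_2(\xi)|^2\bigr) + O(\eps^4),
\end{equation*}
which combined with the previous display gives the asserted expansion of $m(\xi)$.

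The main obstacle I anticipate is securing the uniform-in-$\xi$ remainder estimate $|R_j(t,\xi)| \le C\eps^3 t^{-1-\delta}$ with the \emph{cubic} smallness in $\eps$, rather than the bound $O(\eps) t^{-1-\delta}$ one might naively read off. Standard long-range analysis for the 1D cubic NLS produces remainder control most naturally in weighted $L^2_\xi$ norms; to obtain the pointwise bound with the correct power of $\eps$ one probably needs to revisit the derivation of the reduced ODE in \cite{LNSS} and exploit Sobolev embedding together with the smallness of $\|\eps\psi\|_{H^2\cap H^{1,1}}$. Provided that this strengthening is available—which the structure of the argument in \cite{LNSS} suggests it is—the remaining steps are routine.
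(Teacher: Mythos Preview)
Your approach is essentially the same as the paper's: the paper writes $m(\xi)=|\alpha_1(2,\xi)|^2-|\alpha_2(2,\xi)|^2+\int_2^\infty \rho(\tau,\xi)\,d\tau$ with exactly your $\rho=2\,\realpart(\overline{\alpha_1}R_1-\overline{\alpha_2}R_2)$, quotes from \cite{LNSS} the integrated bound $\int_2^\infty|\rho|\,d\tau\le C\eps^4\jb{\xi}^{-2}$ (which dispels the concern you raise about the pointwise $O(\eps^3)$ control of $R_j$), and then proves $\alpha_j(2,\xi)=\eps\hat\psi_j(\xi)+O(\eps^3)$ by a short-time argument. The only cosmetic differences are the base time ($t=2$ versus your $t=1$) and the route to the $L^\infty_\xi$ short-time bound: the paper controls $\|\op{U}(-t)N_j(u)\|_{H^{0,1}}$ via $\|u\|_{L^\infty}^2(\|u\|_{L^2}+\|\op{J}u\|_{L^2})$ and Sobolev embedding, whereas you use the $L^1_x\to L^\infty_\xi$ bound directly.
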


As a consequence of Theorem~\ref{thm_main}, we have the following criteria 
for (non-)triviality of the scattering state
$\varphi^+=(\varphi_1^+,\varphi_2^+)$ for \eqref{eq}--\eqref{data_eps}.

\begin{cor}\label{cor_criterion1}
Assume that there exist points $\xi^* \in \R$ and $\xi_* \in \R$ such that
\begin{align}
 |\hat{\psi}_1(\xi^*)| > |\hat{\psi}_2(\xi^*)|
\label{m_plus}
\end{align}
and
\begin{align}
 |\hat{\psi}_1(\xi_*)| < |\hat{\psi}_2(\xi_*)|,
\label{m_minus}
\end{align}
respectively.
Then, for sufficiently small $\eps$, we have
$\|\varphi_1^+\|_{L^2}>0$ and $\|\varphi_2^+\|_{L^2}>0$.
\end{cor}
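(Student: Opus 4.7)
The plan is to combine Theorem~\ref{thm_main} with Proposition~\ref{prop_key_lnss1}, and to upgrade the pointwise assumptions \eqref{m_plus}--\eqref{m_minus} to statements on sets of positive measure. Concretely, I would introduce the auxiliary function
\begin{align*}
f(\xi) := |\hat{\psi}_1(\xi)|^2 - |\hat{\psi}_2(\xi)|^2,
\end{align*}
so that Theorem~\ref{thm_main} gives $m(\xi) = \eps^2 f(\xi) + O(\eps^4)$ uniformly in $\xi \in \R$. The two halves of the corollary are symmetric: \eqref{m_plus} should yield $\|\varphi_1^+\|_{L^2} > 0$, and \eqref{m_minus} should yield $\|\varphi_2^+\|_{L^2} > 0$. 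I treat only the first; the second is identical with the roles of $\hat{\psi}_1$ and $\hat{\psi}_2$ exchanged and the sign reversed.

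The first step is to observe that $\hat{\psi}_j$ is continuous on $\R$. Indeed, $\psi_j \in H^{1,1} \subset L^1(\R)$ by Cauchy--Schwarz (since $\langle x\rangle^{-1} \in L^2$), hence $\hat{\psi}_j$ is continuous (in fact bounded). Therefore $f$ is continuous. Assumption \eqref{m_plus} reads $f(\xi^*) > 0$, so by continuity there exist $c^* > 0$ and a closed interval $I^* \subset \R$ containing $\xi^*$ with $|I^*| > 0$ such that $f(\xi) \ge c^*$ for all $\xi \in I^*$.

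Next, I would use the uniform error estimate from Theorem~\ref{thm_main}: there exists $C > 0$ such that $|m(\xi) - \eps^2 f(\xi)| \le C \eps^4$ for every $\xi \in \R$ and every sufficiently small $\eps > 0$. Choosing $\eps$ so small that $C \eps^2 < c^*/2$, I obtain $m(\xi) \ge \eps^2 c^* - C \eps^4 > 0$ for all $\xi \in I^*$. By Proposition~\ref{prop_key_lnss1}, this forces $\hat{\varphi}_1^+(\xi) \ne 0$ for every $\xi \in I^*$. Since $I^*$ has positive Lebesgue measure, $\hat{\varphi}_1^+$ does not vanish almost everywhere, and the Plancherel identity yields $\|\varphi_1^+\|_{L^2} = \|\hat{\varphi}_1^+\|_{L^2} > 0$. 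The argument for $\varphi_2^+$ based on \eqref{m_minus} is the mirror image, using that $m(\xi) < 0$ on an interval around $\xi_*$ implies $\hat{\varphi}_2^+ \neq 0$ there.

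The only delicate point in this outline is the transition from the pointwise assumptions \eqref{m_plus}--\eqref{m_minus} to positive-measure conclusions; this is what forces the continuity of $\hat{\psi}_j$ and in turn the combination of the uniform-in-$\xi$ error bound from Theorem~\ref{thm_main} with the local lower bound on $f$. Everything else is a direct bookkeeping of the smallness threshold for $\eps$.
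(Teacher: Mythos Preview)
Your proof is correct and follows essentially the same route as the paper's: pass from the pointwise inequality at $\xi^*$ to an interval where $|\hat{\psi}_1|^2-|\hat{\psi}_2|^2$ has a positive lower bound, apply the uniform expansion from Theorem~\ref{thm_main} to get $m>0$ there, and then invoke Proposition~\ref{prop_key_lnss1}. Your explicit justification that $\psi_j\in H^{1,1}\subset L^1$ makes $\hat{\psi}_j$ continuous is a point the paper uses only implicitly (when asserting that $V$ is open and that the minimum over the closed interval is attained and positive).
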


\begin{cor}\label{cor_criterion2}
Assume that
\begin{align}
 |\hat{\psi}_1(\xi)| > |\hat{\psi}_2(\xi)|
\label{m_plus_everywhere}
\end{align}
for all $\xi \in \R$. Then, for sufficiently small $\eps$,
$\varphi_2^+$ vanishes almost everywhere on $\R$,
while $\|\varphi_1^+\|_{L^2}>0$.
\end{cor}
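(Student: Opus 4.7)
The plan is to combine Theorem~\ref{thm_main} with the pointwise sign trichotomy of Proposition~\ref{prop_key_lnss1}. Set $g(\xi) := |\hat{\psi}_1(\xi)|^2 - |\hat{\psi}_2(\xi)|^2$. Since $\psi_j \in H^{1,1} \subset L^1(\R)$, the Fourier transforms $\hat{\psi}_j$ are continuous, so $g$ is continuous on $\R$; and by hypothesis $g(\xi) > 0$ for every $\xi \in \R$. Theorem~\ref{thm_main} supplies a constant $C > 0$, independent of $\xi$, with $|m(\xi) - \eps^2 g(\xi)| \leq C\eps^4$ for all $\xi \in \R$.

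The positivity $\|\varphi_1^+\|_{L^2} > 0$ is the easy half. Fix any $\xi_0 \in \R$; by continuity and strict positivity of $g$, there exist an open interval $I \ni \xi_0$ and $\delta > 0$ with $g \geq \delta$ on $I$. For $\eps$ so small that $C\eps^2 < \delta/2$, the uniform bound forces $m(\xi) \geq \eps^2\delta/2 > 0$ throughout $I$, hence Proposition~\ref{prop_key_lnss1} gives $\hat{\varphi}_1^+(\xi) \neq 0$ for every $\xi \in I$; integrating, $\|\varphi_1^+\|_{L^2}^2 \geq \int_I |\hat{\varphi}_1^+(\xi)|^2\, d\xi > 0$.

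The vanishing of $\varphi_2^+$ almost everywhere reduces, via Proposition~\ref{prop_key_lnss1}, to showing that $E := \{\xi \in \R : m(\xi) < 0\}$ has Lebesgue measure zero. The uniform bound immediately yields $E \subset \{g \leq C\eps^2\}$. Here lies the main obstacle: since $\hat{\psi}_j(\xi) \to 0$ as $|\xi| \to \infty$, the function $g$ itself decays to $0$ at infinity, so $\{g \leq C\eps^2\}$ has infinite Lebesgue measure for every $\eps > 0$, and the bare statement of Theorem~\ref{thm_main} is not by itself sharp enough. To close the argument I expect one must revisit the proof of Theorem~\ref{thm_main} --- in which $m(\xi)$ arises as the limit of the conserved quantity $|\alpha_1(t,\xi)|^2 - |\alpha_2(t,\xi)|^2$ for the profiles $\alpha_j(t,\xi) := e^{it\xi^2/2}\widehat{u_j}(t,\xi)$ --- and extract a pointwise refinement in which the error $m(\xi) - \eps^2 g(\xi)$ is controlled by a multiple of the profile size itself (rather than by a uniform constant), so that $m(\xi) < 0$ becomes incompatible with the strict pointwise inequality $|\hat{\psi}_1(\xi)| > |\hat{\psi}_2(\xi)|$ once $\eps$ is small. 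Granted such a refinement, $E = \emptyset$, and Proposition~\ref{prop_key_lnss1} delivers $\hat{\varphi}_2^+(\xi) = 0$ for every $\xi \in \R$.
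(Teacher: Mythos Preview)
Your treatment of $\|\varphi_1^+\|_{L^2}>0$ is correct and matches the paper's argument for Corollary~\ref{cor_criterion1}. For the vanishing of $\varphi_2^+$, however, your proof is incomplete: you correctly diagnose that the uniform $O(\eps^4)$ remainder in Theorem~\ref{thm_main} cannot by itself force $m(\xi)>0$ for large $|\xi|$ (since $g(\xi)\to 0$ there), but the ``pointwise refinement'' you invoke is neither stated precisely nor proved, so the argument stops at a conjecture.

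The paper avoids this obstacle by a different device: rather than trying to show $m(\xi)>0$ for all $\xi$, it exploits the fact, already contained in Proposition~\ref{prop_lnss1}, that $\hat{\varphi}_2^+\in L^2$. Fix a cut-off $\chi$ with $\chi=1$ on $[-1,1]$ and $\chi=0$ outside $[-2,2]$, and set $\chi_q(\xi)=\chi(\xi/q)$. Given $\delta>0$, choose $q$ so large that $\|(1-\chi_q)\hat{\varphi}_2^+\|_{L^2}<\delta$. On the compact set $\{|\xi|\le 2q\}$ the continuous positive function $g$ attains a strictly positive minimum $C_2$, so Theorem~\ref{thm_main} gives $m(\xi)\ge C_2\eps^2-C\eps^4>0$ there once $\eps$ is small, and Proposition~\ref{prop_key_lnss1} then forces $\chi_q\hat{\varphi}_2^+\equiv 0$. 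Hence $\|\varphi_2^+\|_{L^2}=\|(1-\chi_q)\hat{\varphi}_2^+\|_{L^2}<\delta$, and since $\delta$ is arbitrary, $\varphi_2^+=0$ a.e. The idea you are missing is that one only needs $m>0$ on compact sets; the $L^2$-integrability of $\hat{\varphi}_2^+$ absorbs the tail, and no sharpening of the error term in Theorem~\ref{thm_main} is required.
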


It follows from  \eqref{scattering} and Corollary~\ref{cor_criterion1}
that both $u_1(t)$ and $u_2(t)$ behave like non-trivial free solutions as
$t\to +\infty$. In particular, we see that
$L^2$ decay does not occur for $u_1(t)$ and $u_2(t)$
under \eqref{m_plus} and \eqref{m_minus}.
To the contrary,  Corollary~\ref{cor_criterion2} tells us
that only the second component $u_2(t)$ is dissipated as
$t\to \infty$ in the sense of $L^2$ under \eqref{m_plus_everywhere}.
We emphasize again that such phenomena do not occur in the usual short-range
settings. In this sense, the dynamics for the system \eqref{eq} is much more
delicate than that for the single Schr\"odinger equation with 
dissipative cubic nonlinear terms.

\section{Proofs}
This section is devoted to the proofs of Theorem~\ref{thm_main} and its
corollaries.
In what follows, we will denote various positive constants by the same letter
$C$, which may vary from one line to another.

\subsection{Proof of Theorem~\ref{thm_main}}
We set
$\dis{\alpha_j(t,\xi)=\op{F} \bigl[\op{U}(-t)u_j(t,\cdot)\bigr](\xi)}$
for the solution $u=(u_1,u_2)$ to \eqref{eq}--\eqref{data_eps}.
According to \cite{LNSS}, we have in fact the following expression for $m(\xi)$ 
in Proposition~\ref{prop_key_lnss1}:
\begin{align}\label{m}
m(\xi)=\left|\alpha_1(2,\xi)\right|^2-\left|\alpha_2(2,\xi)\right|^2+\int_2^{\infty}\rho(\tau,
\xi)d\tau,
\end{align}
where
\[
 \rho(t,\xi)
 =
 2\realpart\Bigl[ 
  \cc{\alpha_1(t,\xi)}R_1(t,\xi)- \cc{\alpha_2(t,\xi)}R_2(t,\xi) 
  \Bigr],
\]
\[
R_1
=
\frac{1}{t} |\alpha_2|^2 \alpha_1
-\op{F}\op{U}(-t)\bigl[ |u_2|^2 u_1\bigr],
\]
\[
R_2
=
\frac{1}{t} |\alpha_1|^2 \alpha_2
-\op{F}\op{U}(-t)\bigl[ |u_1|^2 u_2\bigr].
\]
From the argument of Section~3 in \cite{LNSS}, we already know
the following  estimate for $\rho$: 
\begin{align}\label{rho}
\int_2^{\infty}|\rho(\tau,\xi)|d\tau
\le
 C\eps^4 \jb{\xi}^{-2}.
\end{align}
It follows from \eqref{m} and \eqref{rho} that
\begin{align*}
 \sup_{\xi\in \R}
 \Bigl|
 m(\xi)
 -
 \bigl(|\alpha_1(2,\xi)|^2-|\alpha_2(2,\xi)|^2\bigr)
 \Bigr|
 \le
 C\eps^4.
\end{align*}
Therefore, it suffices to prove the following lemma.
\begin{lem}\label{lem_short_time} For $j=1$, $2$, we have
\[
\alpha_j (2,\xi) =\eps \hat{\psi}_j(\xi)+ O(\eps^{3})
\]
as $\eps \to +0$, uniformly in $\xi \in \R$.
\end{lem}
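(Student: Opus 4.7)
The plan is to derive a Duhamel-type formula for $\alpha_j(t,\xi)$, plug in $t=2$, and control the integral term by a trivial $L^{\infty}_{\xi}\leftarrow L^1_x$ Fourier estimate together with short-time $O(\eps)$ bounds on the solution.

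First I would compute $\partial_t \alpha_j(t,\xi)$. Since $\op{U}(-t)=e^{-i(t/2)\pa_x^2}$, the time derivative hitting $\op{U}(-t)$ produces $-i(1/2)\pa_x^2$, which exactly cancels the linear part $i(1/2)\pa_x^2 u_j$ coming from the equation $\op{L}u_j = -i N_j$, where $N_1 = |u_2|^2 u_1$ and $N_2 = |u_1|^2 u_2$. What survives is
\[
\partial_t \alpha_j(t,\xi)
= -\op{F}\op{U}(-t)\bigl[N_j(t,\cdot)\bigr](\xi)
= -e^{it\xi^2/2}\,\widehat{N_j(t,\cdot)}(\xi).
\]
Integrating from $0$ to $2$ and using $\alpha_j(0,\xi)=\eps\hat{\psi}_j(\xi)$ gives
\[
\alpha_j(2,\xi) = \eps \hat{\psi}_j(\xi) - \int_0^2 e^{it\xi^2/2}\,\widehat{N_j(t,\cdot)}(\xi)\,dt.
\]

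Next I would estimate the integral uniformly in $\xi$. The trivial bound $|\widehat{N_j(t,\cdot)}(\xi)| \le \|N_j(t,\cdot)\|_{L^1}$ reduces the task to showing $\|N_j(t,\cdot)\|_{L^1} \le C\eps^3$ on $[0,2]$. By H\"older,
\[
\|N_1(t)\|_{L^1} \le \|u_1(t)\|_{L^{\infty}} \|u_2(t)\|_{L^2}^2,
\]
and similarly for $N_2$. So it is enough to know that $\|u_j(t)\|_{L^2} \le C\eps$ and $\|u_j(t)\|_{L^{\infty}} \le C\eps$ uniformly for $t \in [0,2]$ when $\eps$ is small.

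The $L^2$ bound is immediate from the dissipative structure: pairing $\op{L}u_1 = -i|u_2|^2 u_1$ with $\cc{u_1}$ and taking real parts yields $\partial_t \|u_1(t)\|_{L^2}^2 = -2\int |u_2|^2|u_1|^2\, dx \le 0$, and the same for $u_2$, so $\|u_j(t)\|_{L^2} \le \eps\|\psi_j\|_{L^2}$. The $L^{\infty}$ bound follows from a standard short-time $H^1$ (or $H^2\cap H^{1,1}$) energy estimate for the cubic system, which is a special case of the local well-posedness argument used in \cite{LNSS}: on the compact time interval $[0,2]$ one obtains $\|u_j(t)\|_{H^1} \le 2\eps\|\psi_j\|_{H^1}$ for $\eps$ small, whence $\|u_j(t)\|_{L^{\infty}} \le C\|u_j(t)\|_{H^1} \le C\eps$ by the 1D Sobolev embedding.

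Combining these gives $\|N_j(t)\|_{L^1} \le C\eps^3$ for $t \in [0,2]$, hence
\[
\left|\int_0^2 e^{it\xi^2/2}\,\widehat{N_j(t,\cdot)}(\xi)\,dt\right| \le 2C\eps^3
\]
uniformly in $\xi$, which yields the claim. There is essentially no serious obstacle here; the only point requiring care is checking that the short-time $O(\eps)$ a priori bounds on $[0,2]$ are already available from \cite{LNSS} (they are, since the global bounds derived there are a fortiori valid on the fixed interval $[0,2]$), so that the elementary $L^1\to L^{\infty}$ Fourier argument closes the estimate without any oscillatory or stationary-phase input.
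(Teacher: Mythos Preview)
Your proof is correct and follows the same Duhamel-on-$[0,2]$ strategy as the paper, but the $\sup_{\xi}$ control is handled differently. The paper does not use the $L^1_x\to L^{\infty}_{\xi}$ Fourier bound; instead it uses the embedding $\|\hat g\|_{L^{\infty}}\le C\|g\|_{H^{0,1}}$ and then estimates $\|\op{U}(-\tau)N_j(u(\tau))\|_{H^{0,1}}$ via the commutation identity $\op{U}(t)x\op{U}(-t)=\op{J}$, arriving at $\|u\|_{L^{\infty}}^2\bigl(\|u\|_{L^2}+\|\op{J}u\|_{L^2}\bigr)$. This plugs directly into the global a~priori bounds $\|u(t)\|_{L^2}+\|\op{J}u(t)\|_{L^2}\le C\eps(1+t)^{\gamma}$ and $\|u(t)\|_{L^{\infty}}\le C\eps(1+t)^{-1/2}$ already available from \cite{LiS}, so no separate short-time $H^1$ argument is invoked. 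Your $L^1$ route is slightly more elementary in that it avoids weighted norms and $\op{J}$ altogether, at the price of appealing to a short-time $H^1$ energy bound for the $L^{\infty}$ control; the paper's route has the advantage of quoting only estimates that are already on the shelf. Either way the step is routine once $O(\eps)$ bounds on $[0,2]$ are in hand.
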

\begin{proof}
First we recall the estimates which have been shown in 
\cite{LiS} (or \cite{Kim}):
\begin{align*}
 \|u(t)\|_{L^2}+\|\op{J}u(t)\|_{L^2} \le C\eps (1+t)^{\gamma},
\end{align*}
\begin{align*}
 \|u(t)\|_{L^{\infty}} \le C\eps (1+t)^{-1/2},
\end{align*}
where $\gamma\in (0,1/12)$ and $\op{J}=x+it\pa_x$.
We set $N_1(u)=|u_2|^2u_1$ and $N_2(u)=|u_1|^2u_2$.
Then it follows from the relations
$\partial_{t}\op{U}(-t)u_j=-\op{U}(-t)N_j(u)$, 
$\op{U}(t)x\op{U}(-t)=\op{J}$ and the Sobolev embedding that
\begin{align*}
 \sup_{\xi\in \R}\bigl|\alpha_j (2,\xi) -\eps \hat{\psi}_j(\xi)\bigr|
&\le
C\bigl\|\op{U}(-2)u_j (2,\cdot) -u_j(0,\cdot)\bigr\|_{H^{0,1}}\\
&\le
C\int_0^2 \|\op{U}(-\tau) N_j(u(\tau))\|_{H^{0,1}}\, d\tau\\
&\le
C\int_0^2 \|u(\tau)\|_{L^{\infty}}^2
\bigl(\|u(\tau)\|_{L^2}+\|\op{J}u(\tau)\|_{L^2}\bigr)\, d\tau\\
&\le
C\eps^3.
\end{align*}
\end{proof}

\subsection{Proof of Corollary~\ref{cor_criterion1}}
We put $V=\{\xi \in \R\, |\, |\hat{\psi}_1(\xi)| > |\hat{\psi}_2(\xi)|\}$.
By \eqref{m_plus}, we see that $V$ is a non-empty open set.
Now we take $r>0$ so small  that the closed interval
$I=[\xi^*-r, \xi^*+r]$ is included in $V$, and we put
\[
 C_1
=
\min_{\xi \in I}
\bigl(|\hat{\psi}_1(\xi)|^2-|\hat{\psi}_2(\xi)|^2\bigr).
\]
Then we have $C_1>0$, and Theorem~\ref{thm_main} gives us
\[
 m(\xi) \ge C_1\eps^2 -C\eps^4>0
\]
for $\xi \in I$, if $\eps>0$ is small enough.
By Proposition~\ref{prop_key_lnss1}, we have
$\hat{\varphi}_1^+(\xi)\ne 0$ for $\xi \in I$. Therefore we obtain
\[
\|\varphi_1^+\|_{L^2}
\ge
\|\hat{\varphi}_1^+\|_{L^2(I)}
>0.
\]
Similarly, \eqref{m_minus} yields $\|{\varphi}_2^+\|_{L^2}>0$.
\qed\\

\subsection{Proof of Corollary~\ref{cor_criterion2}}
Let $\chi:\R\to \R$ be a cut-off function satisfying
$\chi(\xi)=1$ ($|\xi|\le 1$) and $\chi(\xi)=0$ ($|\xi|\ge 2$).
For given $\delta>0$, we can choose $q \ge 1$ so large that
$\|(1-\chi_{q})\hat{\varphi}_2^+\|_{L^2} < \delta$, 
where $\chi_{q}(\xi)=\chi(\xi/q)$. With this $q$, we put
\[
 C_2
=
\min_{|\xi|\le 2q} 
\bigl(|\hat{\psi}_1(\xi)|^2-|\hat{\psi}_2(\xi)|^2\bigr). 
\]
Then we have $C_2>0$, because of \eqref{m_plus_everywhere}.
So it follows from Theorem~\ref{thm_main} that
\[
 m(\xi) \ge C_2\eps^2 -C\eps^4>0
\]
for $|\xi|\le 2q$, if $\eps>0$ is small enough. By
Proposition~\ref{prop_key_lnss1}, we deduce that
$\chi_{q}(\xi)\hat{\varphi}_2^+(\xi)=0$ for all $\xi \in \R$.
Therefore
\[
 \|\varphi_2^+\|_{L^2} = \|(1-\chi_{q})\hat{\varphi}_2^+\|_{L^2} <\delta.
\]
Since $\delta$ can be taken arbitrarily small,
this means that $\varphi_2^+$ vanishes
almost everywhere on $\R$.
\qed

\medskip
\subsection*{Acknowledgments}
The authors would like to express their gratitude to Professors
Soichiro Katayama, Naoyasu Kita and Satoshi Masaki for
their valuable comments on the paper \cite{LNSS}, which motivate the
present work.
This work is supported by the Research Institute for Mathematical 
Sciences, an International Joint Usage/Research Center located in Kyoto 
University. 
The work of H.~S. is supported by Grant-in-Aid for Scientific Research (C)
(No.~17K05322), JSPS.


\end{document}